\theoremstyle{plain}
\newtheorem{theorem}{Theorem}[section]
\newtheorem{lemma}[theorem]{Lemma}
\newtheorem*{theorem*}{Theorem}
\theoremstyle{definition}
\theoremstyle{remark}
\newcommand{\R}{\mathbb{R}}
\newcommand{\Z}{\mathbb{Z}}
\newcommand{\C}{\mathbb{C}}
\newcommand{\Q}{\mathbb{Q}}
\newcommand{\abs}[1]{\left|#1\right|}
\renewcommand{\pmod}[1]{\hspace{0.25em}\left(\operatorname{mod}\text{ }#1\right)}
\newcommand{\slz}{\ensuremath{\operatorname{SL}_2(\mathbb{Z})}\xspace}
\newcommand{\abcd}{\ensuremath{\left(\begin{smallmatrix}a&b\\c&d\end{smallmatrix}\right)}\xspace}
\newcommand{\mpz}{\ensuremath{\operatorname{Mp}_2(\mathbb{Z})}\xspace}
\DeclareMathOperator{\Mp}{Mp}
\newcommand{\mpr}{\ensuremath{\operatorname{Mp}_2(\mathbb{R})}\xspace}
\DeclareMathOperator{\spn}{span}
\DeclareMathOperator{\SL}{SL}
\DeclareMathOperator{\GL}{GL}
\newcommand{\pfrac}[2]{\left(\frac{#1}{#2}\right)}
\newcommand{\ep}{\varepsilon}
\renewcommand{\pmatrix}[4]{\left(\begin{smallmatrix}#1 & #2 \\ #3 & #4\end{smallmatrix}\right)}
\title{An infinite family of vector-valued mock theta functions}
\author{ 
	{Nickolas Andersen}\\
	Department of Mathematics\\
	Brigham Young University\\
	Provo, UT \\
	\texttt{nick@math.byu.edu} \\
	\And
	{Clayton Williams}\\
	Department of Mathematics\\
	University of Illinois, Urbana-Champaign\\
	Urbana, IL \\
	\texttt{cw78@illinois.edu} \\
}
\begin{document}
\maketitle

\begin{abstract}
We exhibit an infinite family of vector-valued mock theta functions indexed by positive integers coprime to $6$.
These are built from specializations of Dyson's rank generating function and related functions studied by Watson, Gordon, and McIntosh.
The associated completed harmonic Maass forms transform according to the Weil representation attached to a rank one lattice.
This strengthens a 2010 result of Bringmann and Ono and a 2019 result of Garvan.
\end{abstract}

\section{Introduction}

Ramanujan introduced the mock theta functions in his famous last letter to G.H. Hardy in January 1920, giving 17 examples together with several conjectural relations involving them (see Section~14 of \cite{andrews-berndt-lost-notebook-5}).
He grouped these functions into families which he named third order, fifth order, and seventh order, where the conjectured identities relate functions of the same order.
For example, three of the fifth order mock theta functions are
\begin{align*}
    \phi_0(q) &= 1 + \sum_{n=1}^\infty q^{n^2}(1+q)(1+q^3)\cdots(1+q^{2n-1}), \\
    \chi_0(q) &= 1 + \sum_{n=1}^\infty \frac{q^n}{(1-q^{n+1})\cdots(1-q^{2n})}, \\
    F_0(q) &= 1 + \sum_{n=1}^\infty \frac{q^{2n^2}}{(1-q)\cdots(1-q^{2n-1})},
\end{align*}
and they are related by
\begin{equation} \label{eq:phi0-chi0-F0}
    \phi_0(-q) + \chi_0(q) = 2F_0(q).
\end{equation}
Watson proved \eqref{eq:phi0-chi0-F0}, along with all of the other fifth order relations in Ramanujan's last letter, in \cite{watson}.

Ramanujan also listed ten identities (called the ``mock theta conjectures'') in the Lost Notebook (see Sections~5--6 of \cite{andrews-berndt-lost-notebook-5}), each involving one of the fifth order mock theta functions and a specialization of the function
\begin{equation} \label{eq:mrq-def}
    M(r,q) = \sum_{n=1}^\infty \frac{q^{n(n-1)}}{(q^r;q)_n(q^{1-r};q)_n}, \qquad (a;q)_n = \prod_{m=0}^{n-1}(1-aq^m).
\end{equation}
See \cite{andrews-garvan} and \cite{gordon-mcintosh} for more  on the mock theta conjectures.
One of the identities is
\begin{equation}
    F_0(q) - 1 = qM(\tfrac 15,q^5) - q\psi(q^5) H(q^2),
\end{equation}
where $\psi(q)$ is a theta function and $H(q)$ is one of the Rogers-Ramanujan functions.
The mock theta conjectures were proved by Hickerson \cite{hickerson1988proof}, who used Andrews and Garvan's result \cite{andrews-garvan} that the identities naturally group into two families of five and that in each family, all five identities are equivalent.
Zwegers showed in his PhD thesis \cite{zwegers2002mock} that Ramanujan's 17 mock theta functions can be completed to real analytic vector-valued modular forms of weight $1/2$, thus the mock theta conjectures can be interpreted as identities among harmonic Maass forms.
Folsom \cite{folsom} used this framework to reduce the mock theta conjectures to a large finite computation. 
More recently, the first author \cite{andersen2016vector} gave a simultaneous proof of eight of the ten identities (the remaining two following easily from the rest) by proving an equality between real analytic vector-valued modular forms that transform according to the Weil representation. 
This approach does not require a large computational verification.
It also provides a conceptual reason for the identities to be true and for the identities within the Andrews--Garvan families to be related.

The main objects of study in this paper are the mock theta functions $M(r,q)$ and their interactions with the rank generating function
\begin{equation} \label{eq:rank-gen-def}
    R(\zeta,q) = 1 + \sum_{n=1}^\infty \frac{q^{n^2}}{(\zeta q;q)_n(\zeta^{-1}q;q)_n} = 1 + \sum_{n=1}^\infty \sum_{m=-\infty}^\infty N(m,n)\zeta^mq^n.
\end{equation}
Here $N(m,n)$ is the number of partitions of $n$ with rank $m$ (see \cite{bringono2010dyson}, for example).
The function $R(-1,q)$ equals $f(q)$, one of Ramanujan's third order mock theta functions; here we will consider the cases when $\zeta=\zeta_c^a=e^{2\pi i a/c}$.
Bringmann and Ono \cite{bringono2010dyson} gave a recipe for completing the functions $M(\frac{a}{c},q)$ and $R(\zeta_c^a,q)$, where $(c,6)=1$, to harmonic Maass forms of weight $1/2$, and described their transformation properties under the generators $z\mapsto z+1$ and $z\mapsto -1/z$ of the modular group.
Here $q$ and $z$ are related in the usual way: $q=e(z)=e^{2\pi iz}$.
Garvan \cite{garvan2019transformation} also studied these functions and made the formulas of Bringmann and Ono more explicit.
For example, he showed that the functions
\begin{align}
    \mathcal M(a,0,c;z) &= 2q^{\frac{3a}{2c}(1-\frac ac)-\frac 1{24}} M(\tfrac ac,q) + \varepsilon(\tfrac ac,z) + \mathrm{NH}_1(z) \\
    \text{and }\quad \mathcal N(a,0,c;z) &= \csc (\tfrac{\pi a}{c}) q^{-\frac 1{24}} R(\zeta_c^a,q) + \mathrm{NH}_2(z)
\end{align}
are harmonic Maass forms of weight $1/2$, where, for $j\in \{1,2\}$, $\mathrm{NH}_j$ is a nonholomorphic period integral of a unary theta function, and $\varepsilon(\frac ac,z)$ is only nonzero if $0\leq \frac{a}{c}<\frac 16$ or $\frac 56<\frac ac<1$, in which case it is a multiple of a power of $q$.
The transformation laws for $\mathcal M(a,0,c;z)$ and $\mathcal N(a,0,c;z)$ under $z\mapsto z+1$ and $z\mapsto -1/z$ introduce auxiliary functions $\mathcal M(a,b,c;z)$ and $\mathcal N(a,b,c;z)$, where $0\leq a,b<c$.
See Section~2 for precise definitions of these functions.
By Theorem~3.1 of \cite{garvan2019transformation} we have
\begin{align}\label{eq:M-T-transform}
    \mathcal M(a,b,c;z+1) &= \zeta_{2c}^{5a}\zeta_{2c^2}^{-3a^2}\zeta_{24}^{-1}
    \mathcal M(a,[a+b]_c,c;z), \\ \label{eq:N-T-transform}
    \mathcal N (a,b,c;z+1)&=
    \zeta_{2c^2}^{3b^2}\zeta_{24}^{-1}\mathcal N ([a-b]_c,b,c;z)\times \begin{dcases}
    1 &\text{if }a\geq b,\\
    -\zeta_c^{-3b}&\text{otherwise,}
    \end{dcases}
\end{align}
where $[n]_c$ denotes the least nonnegative residue of $n$ modulo $c$,
and
\begin{equation} \label{eq:M-N-S-transform}
  \mathcal M\left(a,b,c;-1/z\right)=
  \sqrt{-iz} \, \mathcal N (a,b,c;z).
\end{equation}
From these transformation laws it follows that
for each positive integer $c$ coprime to $6$,
\begin{equation}
    \spn\left(\{\mathcal M(a,b,c;z):0\leq a,b<c\}\cup \{\mathcal N(a,b,c;z):0\leq a,b<c\}\right)
\end{equation}
is closed under the action of $\SL_2(\Z)$; this is essentially the statement of Theorem~3.4 of \cite{bringono2010dyson} and Corollary~3.2 of \cite{garvan2019transformation}.

Our theorem makes this observation more explicit by describing a vector-valued harmonic Maass form of weight $1/2$ that transforms according to the Weil representation attached to a finite quadratic module of order $12c^2$.
See Section~\ref{sec:weil-rep} for precise definitions and notation. 
Briefly, let $L(c)$ denote the lattice $\Z$ together with the bilinear form $(x,y)=-12c^2xy$.
If $L'(c)$ denotes the dual lattice, then we have $L'(c)/L(c)\cong \Z/12c^2\Z$.
Denote the standard basis of $\C[L'(c)/L(c)]$ by $\{\mathfrak e_h: h\in \Z/12c^2\Z\}$  and let $\rho_{L(c)}:\Mp_2(\Z)\to \GL(\C[L'/L])$ be the associated Weil representation.
Finally, let $\mathcal H_k(\rho_{L(c)})$ denote the space of harmonic Maass forms of weight $k$ and type $\rho_{L(c)}$.

\begin{theorem}\label{thm:main}
For each positive integer $c$ coprime to $6$, define
\begin{equation*}
    H_c(z) = \sum_{h(12c^2)}\sum_{a=0}^{c-1}\sum_{b=0}^{c-1} \big(\alpha_h(a,b) \mathcal M(a,b,c;z) + \beta_h(a,b)\mathcal N(a,b,c;z)\big)\mathfrak e_h,
\end{equation*}
where
\begin{align*}
    \alpha_h(a,b) &=
    \begin{dcases}
    \pm \frac{1}{2\sin(\pi/c)} \pfrac{12}{k} e\left(\frac{b(5\pm k)}{2c}\right) & \text{ if }a\neq 0 \text{ and } h = \pm 6a + ck, \\
    \frac{i\sin(b k\pi /c)}{\sin(\pi/c)}\pfrac{12}{k}e\left(\frac{5b}{2c}\right) & \text{ if }a=0 \text{ and }h=ck, \\
    0 & \text{ otherwise,}
    \end{dcases}
    \\
    \beta_h(a,b) &= 
    \begin{dcases}
    \pm \frac{i}{2\sin(\pi/c)} \pfrac{12}{k} e\left(\frac{5b\pm a k}{2c}-\frac{3ab}{c^2}\right) & \text{ if }b\neq 0 \text{ and } h = \mp 6b + ck, \\
    \frac{-\sin(a k\pi /c)}{\sin(\pi/c)}\pfrac{12}{k} & \text{ if }b=0\text{ and }h=ck, \\
    0 & \text{ otherwise.}
    \end{dcases}
\end{align*}
Then ${H}_c (z) \in \mathcal H_{\frac 12}(\rho_{L(c)})$. 
\end{theorem}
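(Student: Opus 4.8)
\section*{Proof proposal}

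The plan is to reduce the statement to the defining transformation law of $\mathcal H_{1/2}(\rho_{L(c)})$ on a generating set of $\mpz$. Harmonicity, the weight $1/2$, and the growth conditions at the cusps are inherited componentwise: by the work of Bringmann--Ono and Garvan each $\mathcal M(a,b,c;z)$ and each $\mathcal N(a,b,c;z)$ is already a scalar harmonic Maass form of weight $1/2$, and $H_c$ is a finite $\C$-linear combination of these with constant coefficients $\alpha_h(a,b),\beta_h(a,b)$, hence is annihilated by $\Delta_{1/2}$ and has the required moderate growth in every component. Thus the only genuinely new content is that $H_c$ transforms according to $\rho_{L(c)}$. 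Since $\mpz$ is generated by $\tilde T=(\left(\begin{smallmatrix}1&1\\0&1\end{smallmatrix}\right),1)$ and $\tilde S=(\left(\begin{smallmatrix}0&-1\\1&0\end{smallmatrix}\right),\sqrt z)$, it suffices to verify
\begin{equation*}
H_c(z+1)=\rho_{L(c)}(\tilde T)\,H_c(z)\qquad\text{and}\qquad H_c(-1/z)=\sqrt{-iz}\,\rho_{L(c)}(\tilde S)\,H_c(z).
\end{equation*}
For $L(c)$ the quotient $L'(c)/L(c)\cong\Z/12c^2\Z$ carries the quadratic form $Q(h)=-h^2/(24c^2)$ and $\operatorname{sig}(L(c))=-1$, so on the basis
\begin{equation*}
\rho_{L(c)}(\tilde T)\mathfrak e_h=\expe{-\tfrac{h^2}{24c^2}}\mathfrak e_h,\qquad
\rho_{L(c)}(\tilde S)\mathfrak e_h=\frac{\zeta}{\sqrt{12c^2}}\sum_{h'(12c^2)}\expe{\tfrac{hh'}{12c^2}}\mathfrak e_{h'},
\end{equation*}
where $\zeta$ is the eighth root of unity fixed by $\operatorname{sig}(L(c))=-1$ together with the metaplectic conventions.

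For the $\tilde T$ relation I would substitute the scalar laws \eqref{eq:M-T-transform} and \eqref{eq:N-T-transform} into $H_c(z+1)$ and reindex the inner sums by the shifts $b\mapsto[a+b]_c$ (for the $\mathcal M$-terms) and $a\mapsto[a-b]_c$ (for the $\mathcal N$-terms). After collecting the multipliers $\zeta_{2c}^{5a}\zeta_{2c^2}^{-3a^2}\zeta_{24}^{-1}$ and $\zeta_{2c^2}^{3b^2}\zeta_{24}^{-1}$ against the defining support $h=\pm 6a+ck$ (resp.\ $h=\mp 6b+ck$), the claim becomes the finite, purely arithmetic identity that for every $h,a,b$ the reindexed coefficient equals $\expe{-h^2/(24c^2)}$ times the original coefficient. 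I expect this to reduce to completing a square in $h=\pm6a+ck$ and is handled separately in the generic, $a=0$, and $b=0$ cases.

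The $\tilde S$ relation is the crux. I would first record the companion of \eqref{eq:M-N-S-transform}, namely $\mathcal N(a,b,c;-1/z)=\sqrt{-iz}\,\mathcal M(a,b,c;z)$, obtained from \eqref{eq:M-N-S-transform} by replacing $z$ with $-1/z$ and checking the branch of $\sqrt{-iz}$. Applying both relations to $H_c(-1/z)$ swaps $\mathcal M\leftrightarrow\mathcal N$, so after cancelling $\sqrt{-iz}$ the identity $H_c(-1/z)=\sqrt{-iz}\,\rho_{L(c)}(\tilde S)H_c(z)$ is equivalent to the two finite-Fourier relations
\begin{equation*}
\alpha_{h'}(a,b)=\frac{\zeta}{\sqrt{12c^2}}\sum_{h(12c^2)}\expe{\tfrac{hh'}{12c^2}}\beta_h(a,b),\qquad
\beta_{h'}(a,b)=\frac{\zeta}{\sqrt{12c^2}}\sum_{h(12c^2)}\expe{\tfrac{hh'}{12c^2}}\alpha_h(a,b),
\end{equation*}
for all $h'$ and all $0\le a,b<c$. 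Because $\alpha_h(a,b)$ is supported on $h\equiv\pm6a+ck$ and $\beta_h(a,b)$ on $h\equiv\mp6b+ck$, each inner sum collapses to a sum over $k\bmod 12c$ of the quadratic symbol $\pfrac{12}{k}$ twisted by an additive character in $k$. The main obstacle is to evaluate this quadratic Gauss sum in closed form and to check that it reproduces exactly the prescribed data: the $\sin(\pi/c)$ and $\sin(ak\pi/c)$, $\sin(bk\pi/c)$ factors should emerge from combining the $+$ and $-$ branches of the support, the phase $\expe{(5b\pm ak)/(2c)-3ab/c^2}$ from completing the square in $k$, and the $\pfrac{12}{k}$ Gauss sum should supply both the $\sqrt{12c^2}$ normalization and the residual eighth root that fixes $\zeta$. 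As in the $\tilde T$ case, the degenerate families $a=0$ and $b=0$ must be treated on their own, and consistency of the branch of $\sqrt{-iz}$ must be tracked throughout.
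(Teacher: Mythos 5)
Your proposal follows essentially the same route as the paper's proof: harmonicity, weight, and growth are inherited componentwise from Bringmann--Ono and Garvan, and the substance is reduced to the generator identities for $T$ and $S$, the latter collapsing (because of the support conditions $h=\pm 6a+ck$, $h=\mp 6b+ck$) to the finite Fourier relations \eqref{eq:S-ident} whose evaluation rests on the Gauss sum $\sum_{k(12c)}\pfrac{12}{k}e\pfrac{k\ell}{12c}$, which the paper computes in Lemma~\ref{lem:12p-gauss} by splitting $k=12x+cy$ via the Chinese Remainder Theorem. Note, however, that the entire technical content of the paper's argument lies in the verifications you defer --- the closed-form Gauss sum evaluation, the case analysis in \eqref{eq:beta-T-1} where $[a-b]_c=a-b+c$ and the oddness of $k$ is essential, and the branch bookkeeping showing that your undetermined eighth root $\zeta$ equals $i$ once the automorphy factor $\sqrt{z}$ (not $\sqrt{-iz}$) from the definition of weight-$\tfrac12$ forms is combined with $e\left(\frac{n_--n_+}{8}\right)=e\left(\frac18\right)$ and the companion relation $\mathcal N(a,b,c;-1/z)=\sqrt{-iz}\,\mathcal M(a,b,c;z)$.
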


Here, and throughout, $\pfrac{12}{\cdot}$ denotes the Kronecker symbol. 
We have chosen to normalize $H_c$ as above because the quantity
$\frac{\sin(bk\pi/c)}{\sin(\pi/c)}$
is a cyclotomic unit when $c$ is a prime power and $(bk,c)=1$ (see \cite[Chapter~8]{washington}). 

In Section~\ref{sec:weil-rep} we give some brief background on the Weil representation and harmonic Maass forms, and we summarize the relevant results of \cite{bringono2010dyson} and \cite{garvan2019transformation}.
We prove Theorem~\ref{thm:main} in Section~\ref{sec:proof}.
While the proof is a straightforward verification of a few exponential sum identities, the discovery of the formula in Theorem~\ref{thm:main} was less straightforward.
In Section~\ref{sec:motivation} we briefly discuss how we arrived at Theorem~\ref{thm:main} in the hope that such techniques might be useful in future work.

\subsubsection*{Acknowledgements}
The first author is supported by the Simons Foundation, award number 854098. A result similar to Theorem~\ref{thm:main} for $c=7$ first appeared in the second author's master's thesis \cite{williams_vvmocktheta_2022}.

\section{The Weil representation and harmonic Maass forms} \label{sec:weil-rep}

Let $\mathbb H$ denote the complex upper half-plane and let $\operatorname{SL}_2(\mathbb{R})$ act on $\mathbb{H}$ in the usual way via
$\abcd z = \frac{az+b}{cz+d}$.
The metaplectic group $ \Mp_2(\R) $ is the set
\begin{align}
\mpr =\left\{(M,\phi(z)):M=\abcd\in \operatorname{SL}_2(\R), \textrm{ }\phi(z)^2 = cz+d \textrm{ and }\phi\textrm{ is a holomorphic function}\right\}
\end{align}
together with the group operation
\begin{align}
   (M_1,\phi_1(z))(M_2,\phi_2(z)) = (M_1M_2,\phi_1(M_2z)\phi_2(z)).
\end{align}
Note $\mpr$ is a double-sheeted cover of $\operatorname{SL}_2(\mathbb{R})$. Let $\mpz$ be the inverse image of $\slz$ under the covering map. 
It is generated by $T=(\pmatrix 1101,1)$ and $S=(\pmatrix 0{-1}10,\sqrt{z})$.

Let $ L $ be a lattice with a nondegenerate symmetric bilinear form $ (\cdot,\cdot):L\times L\to \Z$ whose associated quadratic form $ q(x) = \frac{1}{2}(x,x) $ takes its values in $\Z$.
The dual lattice is
\[
L' = \{x\in L\otimes_\Z\Q:(x,y)\in\Z \textrm{ for all }y\in L\} .
\]
The quotient group $ L'/L $ is a finite abelian group called the discriminant group.
The vector space $ \C[L'/L] $ has a standard basis conventionally denoted by $ \{\mathfrak{e}_\gamma:\gamma\in L'/L\}. $ 
Let $(n_-,n_+)$ be the signature of $L$.

There is a representation $\rho_L:\Mp_2(\Z)\to\GL(\C[L'/L])$ called the Weil Representation; it is defined on the generators $T$ and $S$ by 
   \begin{align}
     \rho_L(T)\mathfrak{e}_\gamma &= e(q(\gamma))\mathfrak{e}_\gamma, \label{eq:weil-T}
     \\
     \rho_L(S)\mathfrak{e}_\gamma &= \frac{1}{\sqrt{\abs{L'/L}}}e\left(\frac{n_--n_+}{8}\right)\sum_{\gamma\in L'/L}e(-(\gamma,\gamma'))\mathfrak{e}_{\gamma'}. \label{eq:weil-S}
   \end{align}
For more on the Weil representation, see \cite[pages 15-16]{bruinier2002borprod} and the references therein.

For $k\in \R$, the weight $ k $ hyperbolic Laplacian is defined by
  \begin{align*}
     \Delta_k = -y^2\left(\frac{\partial^2}{\partial x^2}+\frac{\partial^2}{\partial y^2}\right)+iky\left(\frac{\partial}{\partial x}+i\frac{\partial}{\partial y}\right), \qquad z = x+ iy.
  \end{align*}
A function $ f:\mathbb{H}\to \mathbb{C}[L'/L] $ is a vector-valued weak Maass form of weight $ k \in \frac 12\Z $ and type $\rho_L$ if
   \begin{enumerate}
\item $ f(Mz) = \phi(z)^{2k}\rho_L((M,\phi))f(z)$ for all $ (M,\phi)\in \mpz $,
\item there exists a constant $ \lambda $ such that, for all $ z $, $ \Delta_k f(z)=\lambda f(z) $ ($\Delta_k$ is applied componentwise), and

      \item $ f $ has at most linear exponential growth at the cusp at $ \infty. $
\end{enumerate}
A harmonic Maass form is a weak Maass form with $ \lambda=0 $, and the holomorphic part of a harmonic Maass form is called a mock modular form.
For more on (scalar-valued) harmonic Maass forms, see \cite{bfor}.

\subsection{The harmonic Maass forms \texorpdfstring{$\mathcal M(a,b,c;z)$}{M(a,b,c;z)} and \texorpdfstring{$\mathcal N(a,b,c;z)$}{N*a,b,c;z)}}

Here we record the definitions and transformation formulas of the families of harmonic Maass forms used to build $H_c(z)$ in Theorem~\ref{thm:main}. 
Most of this material can be found in Sections~2 and 3 of \cite{garvan2019transformation} (see also \cite{bringono2010dyson}).
Our notation is slightly different than that of \cite{garvan2019transformation}; notably we use $ \mathcal{N},\mathcal{M} $ instead of $ \mathcal{G}_1,\mathcal{G}_2 $ 
and we write $\mathcal N(a,0,c;z)$ and $\mathcal M(a,0,c;z)$ for $\mathcal G_1(\frac ac;z)$ and $\mathcal G_2(\frac ac;z)$ respectively.

Let $ c $ be a positive integer coprime to $6$. 
Generalizing \eqref{eq:mrq-def}, let
\begin{align}
  M(a,b,c;z)&=\frac{1}{(q;q)_\infty}\sum_{n=-\infty}^\infty \frac{(-1)^nq^{n+a/c}}{1-\zeta_c^b q^{n+a/c}}q^{\frac{3}{2}n(n+1)},
\end{align}
where $0\leq a,b<c$ and $a,b$ are not both zero.
To generalize \eqref{eq:rank-gen-def}, we first define
\begin{align}
    k(b,c)&=\begin{dcases}
        0&\textrm{ if }0\leq\tfrac{b}{c}<\tfrac{1}{6},\\
        1&\textrm{ if }\tfrac{1}{6}<\tfrac{b}{c}<\tfrac{1}{2},\\
        2&\textrm{ if }\tfrac{1}{2}<\tfrac{b}{c}<\tfrac{5}{6},\\
        3&\textrm{ if }\tfrac{5}{6}<\tfrac{b}{c}<1,\\\end{dcases}
\end{align}
and
\begin{align*}
  K(a,b,c,n;z)&=(-1)^n\frac{ \sin \left(\frac{\pi  a}{c}-\pi  z \left(2 n
  k(b,c)+\frac{b}{c}\right)\right)+q^n \sin \left(\frac{\pi  a}{c}-\pi  z \left(\frac{b}{c}-2 n k(b,c)\right)\right)}{1-2 q^n \cos \left(\frac{2 \pi  a}{c}-\frac{2 \pi  b z}{c}\right)+q^{2 n}}.
\end{align*}
Note that
\begin{equation}
    K(a,0,c, n;z) = \frac{(-1)^n \left(1+q^n \right)\sin \left(\frac{\pi  a}{c} \right)}{1-2 q^n \cos \left(\frac{2 \pi  a}{c}\right)+q^{2 n}}. 
\end{equation}
For $0\leq a,b<c$ with $a,b$ not both zero, we define
\begin{align}
  N(a,b,c;z)=\frac{1}{(q;q)_\infty}\left(\frac{i\zeta_{2c}^{-a}q^{b/2c}}{2(1-\zeta_c^{-a}q^{b/c})}+\sum_{n=1}^\infty K(a,b,c,n;z)q^{\frac{n(3n+1)}{2}}\right).
\end{align}
By equation (1.6) of \cite{gordon-mcintosh} we have
\begin{equation}
    N(a,0,c;z) = \frac 14\csc\left(\frac{\pi a}{c}\right) R(\zeta_c^a;q).
\end{equation}
Further, define
\begin{align}
  \varepsilon(a,b,c;z)=\begin{dcases}
    2\zeta_c^{-2b} q^{-\frac 32(\frac ac-\frac 16)^2} & \text{if  } 0\leq\tfrac{a}{c}<\tfrac{1}{6},\\
    0 & \text{if }\tfrac{1}{6}<\tfrac{a}{c}<\tfrac{5}{6},\\
    2 q^{-\frac 32(\frac ac - \frac 56)^2} & \text{if  } \tfrac{5}{6}<\tfrac{a}{c}<1.\\
  \end{dcases}
\end{align}
The completed harmonic Maass forms $\mathcal M(a,b,c;z)$ and $\mathcal N(a,b,c;z)$ are defined by
\begin{align}
  \mathcal{M}(a,b,c;z)&=2q^{\frac{3a}{2c}\left(1-\frac{a}{c} \right)-\frac{1}{24}}M(a,b,c;z)+\varepsilon(a,b,c;z)-T_2(a,b,c;z), \\
    \mathcal{N}(a,b,c;z)&=4e\left(-\frac{a}{c}k(b,c) + \frac{3b}{2c}\left(\frac{2a}{c}-1\right)-\frac bc\right) q^{\frac{b}{c}k(b,c)-\frac{3b^2}{2c^2}-\frac{1}{24}}N(a,b,c;z)-T_1(a,b,c;z),
\end{align}
where $T_1(a,b,c;z)$ and $T_2(a,b,c;z)$ are defined in Section~3 of \cite{garvan2019transformation}.
We extend these definitions to the case $a=b=0$ by setting
\begin{equation}
    \mathcal M(0,0,c;z) = \mathcal N(0,0,c;z) = 0.
\end{equation}
By Theorem~3.1 of \cite{garvan2019transformation}, the functions $\mathcal M(a,b,c;z)$ and $\mathcal N(a,b,c;z)$ satisfy the transformation laws \eqref{eq:M-T-transform}, \eqref{eq:N-T-transform}, and \eqref{eq:M-N-S-transform} given in the introduction.

\section{Proof of Theorem~\ref{thm:main}}
\label{sec:proof}

That the components of $H_c(z)$ are annihilated by $\Delta_{1/2}$ was shown already by Garvan in \cite{garvan2019transformation}.
The growth condition at $\infty$ follows from the Fourier expansion of the functions $\mathcal M(a,b,c;z)$ and $\mathcal N(a,b,c;z)$.
So it suffices to show that $H_c(z)$ transforms correctly under the action of $\Mp_2(\Z)$.

Let $L(c)$ be the lattice $\Z$ together with the bilinear form $(x,y) = -12c^2xy$.
Then $L'(c)/L(c)=\{\frac{h}{12c^2} : h\in \Z/12c^2\Z\}$. For convenience we write $\mathfrak{e}_h$ instead of $\mathfrak{e}_{h/12c^2}$. For $L(c)$ equations \eqref{eq:weil-T} and \eqref{eq:weil-S} become
\begin{align}
\rho_{L(c)}(T)\mathfrak{e}_h& = e\left(-\frac{h^2}{24c^2}\right)\mathfrak{e}_h \label{eq:weil-T-c},
\\
    \rho_{L(c)}(S)\mathfrak{e}_h &= \frac{1}{c\sqrt{-12i}}\sum_{h'(12c^2)}e\left(\frac{hh'}{12c^2}\right)\mathfrak{e}_{h'}. \label{eq:weil-S-c}
    \end{align}
By the discussion in Section~\ref{sec:weil-rep} it suffices to verify that
\begin{equation} \label{eq:Hp-T}
    H_c(z+1) = \rho_{L(c)}(T) H_c(z)
\end{equation}
and
\begin{equation} \label{eq:Hp-S}
    H_c(-1/z) = \sqrt{z} \, \rho_{L(c)}(S) H_c(z).
\end{equation}
Because $\rho_{L(c)}(S^2)\mathfrak{e}_h = i\mathfrak{e}_{-h}$, the consistency conditions $\alpha_h(a,b) = -\alpha_{-h}(a,b)$ and $\beta_h(a,b)=-\beta_{-h}(a,b)$ must be satisfied for all $h,a,b.$

\subsection{The transformation (\ref{eq:Hp-T})}
Recall that $[x]_c$ denotes the least nonnegative residue of $x$ modulo $c$.
After applying the equations \eqref{eq:M-T-transform}, \eqref{eq:N-T-transform}, and \eqref{eq:weil-T-c}, we see that \eqref{eq:Hp-T} is implied by the equations
\begin{equation} \label{eq:alpha-T}
    \alpha_h(a,[a+b]_c) = e\left(\frac{h^2}{24c^2} + \frac{5a}{2c} - \frac{3a^2}{2c^2} - \frac{1}{24}\right)\alpha_h(a,b)
\end{equation}
and
\begin{equation} \label{eq:beta-T}
    \beta_h([a-b]_c,b) = e\left(\frac{h^2}{24c^2} + \frac{3b^2}{2c^2} - \frac{1}{24}\right) \beta_h(a,b) \times
    \begin{cases}
    1 & \text{ if $a\geq b$,} \\
    e\left(\frac 12 - \frac{3b}{c}\right) & \text{ if $a<b$.}
    \end{cases}
\end{equation}

For the first equation, we may assume that $h=\pm 6a+ck$ for some $k\in \Z$ with $(k,6)=1$, because $\alpha_h(a,b)=\alpha_h(a,[a+b]_c)=0$ otherwise.
Under that assumption we have
\begin{equation}
    e\left(\frac{h^2}{24c^2} + \frac{5a}{2c} - \frac{3a^2}{2c^2} - \frac{1}{24}\right) = e\left( \frac{a(5\pm k)}{2c}\right). 
\end{equation}
This is enough to prove \eqref{eq:alpha-T} when $a=0$.
Since $5\pm k$ is even, we have
\begin{equation}
    e\left(\frac{[a+b]_c(5\pm k)}{2c}\right) = e\left(\frac{(a+b)(5\pm k)/2}{c}\right) = e\left( \frac{a(5\pm k)}{2c} + \frac{b(5\pm k)}{2c}\right),
\end{equation}
and this is enough to prove \eqref{eq:alpha-T} when $a\neq 0$.

To prove \eqref{eq:beta-T} we may assume that $h=\mp 6b+ck$ for some $k\in \Z$ with $(k,6)=1$.
Then
\begin{equation}
    e\left(\frac{h^2}{24c^2} + \frac{3b^2}{2c^2} - \frac{1}{24}\right) = e\left(\frac{3b^2}{c^2} \mp \frac{bk}{2c}\right).
\end{equation}
This is enough to finish the proof in the case $b=0$ since then $a\geq b$.
When $b\neq 0$ we must show that
\begin{equation}
    e\left(\pm\frac{[a-b]_c k}{2c} - \frac{3[a-b]_cb}{c^2}\right) = e\left(\frac{3b^2}{c^2} \mp \frac{bk}{2c} \pm\frac{ak}{2c} - \frac{3ab}{c^2} \right) \times
    \begin{cases}
    1 & \text{ if $a\geq b$,} \\
    e\left(\frac 12 - \frac{3b}{c}\right) & \text{ if $a<b$.}
    \end{cases}
    \label{eq:beta-T-1}
\end{equation}
If $a\geq b$ then $[a-b]_c=a-b$ and it is straightforward to check that \eqref{eq:beta-T-1} holds.
If $a<b$ then $[a-b]_c = a-b+c$, and thus \eqref{eq:beta-T-1} is true because $k$ is odd.

\subsection{The transformation (\ref{eq:Hp-S})}
As before, we reduce the proof of \eqref{eq:Hp-S} to the verification of two identities by applying \eqref{eq:M-N-S-transform} and \eqref{eq:weil-S-c}.
One identity is
\begin{equation} \label{eq:S-ident}
    \frac{i}{c\sqrt{12}} \sum_{h'(12c^2)} e\pfrac{hh'}{12c^2} \alpha_{h'}(a,b) = \beta_h(a,b),
\end{equation}
and the other is obtained by exchanging the roles of $\alpha_h(a,b)$ and $\beta_h(a,b)$.
If \eqref{eq:S-ident} is true, then
\begin{align}
    \frac{i}{c\sqrt{12}} \sum_{h'(12c^2)} e\pfrac{hh'}{12c^2} \beta_{h'}(a,b) 
    &= \frac{-1}{12c^2} \sum_{h'(12c^2)} e\pfrac{hh'}{12c^2}
    \sum_{h''(12c^2)} e\pfrac{h'h''}{12c^2} \alpha_{h''}(a,b) \\
    &= \frac{-1}{12c^2} \sum_{h''(12c^2)} \alpha_{h''}(a,b) 
    \sum_{h'(12c^2)} e\pfrac{h'(h+h'')}{12c^2}.
    \label{eq:S-two-sums}
\end{align}
The inner sum above equals zero unless $h\equiv -h''\pmod{12c^2}$, in which case it equals $12c^2$.
It follows that \eqref{eq:S-two-sums} evaluates to $-\alpha_{-h}(a,b) = \alpha_h(a,b)$.
Thus it suffices to prove \eqref{eq:S-ident}.
We first prove a lemma.

\begin{lemma} \label{lem:12p-gauss}
If $(c,6)=1$ then
\begin{equation}
    \sum_{k(12c)} \pfrac{12}{k} e\pfrac{k\ell}{12c} = 
    \begin{cases}
    c\sqrt{12} \pfrac{12}{\ell/c} & \text{ if }c\mid \ell, \\
    0 & \text{ otherwise.}
    \end{cases}
\end{equation}
\end{lemma}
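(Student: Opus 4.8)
The plan is to exploit the hypothesis $(c,6)=1$, which is equivalent to $(c,12)=1$, and to split the sum over $\Z/12c\Z$ by the Chinese Remainder Theorem into a twisted Gauss sum modulo $12$ (carrying the character) and a purely additive sum modulo $c$. Write $\chi=\pfrac{12}{\cdot}$, the real primitive Dirichlet character of conductor $12$. Choose integers $\alpha,\beta$ with $c\alpha\equiv 1\pmod{12}$ and $12\beta\equiv 1\pmod{c}$, and for $k$ modulo $12c$ set $k\equiv k_1\pmod{12}$ and $k\equiv k_2\pmod{c}$, so that $k\equiv c\alpha k_1+12\beta k_2\pmod{12c}$ and hence
\[
\frac{k\ell}{12c}\equiv \frac{\alpha k_1\ell}{12}+\frac{\beta k_2\ell}{c}\pmod 1.
\]
Since $\chi(k)$ depends only on $k_1$, and since $(k_1,k_2)$ runs over $\Z/12\Z\times\Z/c\Z$ as $k$ runs over $\Z/12c\Z$, the sum factors as
\[
\sum_{k(12c)}\pfrac{12}{k}e\pfrac{k\ell}{12c}
=\Bigg(\sum_{k_1(12)}\chi(k_1)\,e\pfrac{\alpha k_1\ell}{12}\Bigg)\Bigg(\sum_{k_2(c)}e\pfrac{\beta k_2\ell}{c}\Bigg).
\]

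Next I would evaluate the two factors separately. The second is a complete geometric sum in the $c$-th root of unity $e(\beta\ell/c)$; because $\beta$ is a unit modulo $c$, it equals $c$ when $c\mid\ell$ and vanishes otherwise, which already yields the $0$ in the statement in the case $c\nmid\ell$. When $c\mid\ell$, write $\ell=c\ell'$; using $c\alpha\equiv1\pmod{12}$ the first factor collapses to the twisted Gauss sum $\sum_{k_1(12)}\chi(k_1)e(k_1\ell'/12)$. Here I would invoke the standard identity $\sum_{k(N)}\chi(k)e(km/N)=\overline\chi(m)\,\tau(\chi)$, valid for \emph{every} integer $m$ when $\chi$ is primitive modulo $N$, applied with $N=12$. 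Since $\chi$ is real this gives $\pfrac{12}{\ell'}\tau(\chi)$, and multiplying by the factor $c$ from the additive sum produces $c\,\pfrac{12}{\ell/c}\,\tau(\chi)$.

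The one genuinely nontrivial input, and the step I expect to require the most care, is the evaluation $\tau(\chi)=\sqrt{12}$ with the correct (positive) sign. As $\chi$ is the even real primitive character of discriminant $12$, its Gauss sum is $+\sqrt{12}$; one may cite the classical formula $\tau=\sqrt{N}$ for even real primitive characters, or verify it directly, noting that $\chi$ takes the values $1,-1,-1,1$ at $1,5,7,11$, whence
\[
\tau(\chi)=e\!\left(\tfrac1{12}\right)-e\!\left(\tfrac5{12}\right)-e\!\left(\tfrac7{12}\right)+e\!\left(\tfrac{11}{12}\right)=2\cos\tfrac{\pi}{6}-2\cos\tfrac{5\pi}{6}=2\sqrt3=\sqrt{12}.
\]
With this value the right-hand side becomes $c\sqrt{12}\,\pfrac{12}{\ell/c}$, as claimed. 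Apart from the sign of the Gauss sum, the only point to check carefully is that the index splitting above is an honest bijection, which is precisely the Chinese Remainder Theorem given $(c,12)=1$.
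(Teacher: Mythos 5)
Your proof is correct and follows essentially the same route as the paper: a Chinese Remainder Theorem factorization of the sum into a geometric sum modulo $c$ (giving the $c\mid\ell$ dichotomy) and a twisted Gauss sum modulo $12$ evaluating to $\sqrt{12}\pfrac{12}{\cdot}$. The only difference is cosmetic --- you use the normalized CRT splitting with inverses $\alpha,\beta$, whereas the paper writes $k=12x+cy$ and absorbs the resulting factor $\pfrac{12}{c}$ at the end; you also spell out the evaluation $\tau(\chi)=+\sqrt{12}$, which the paper simply asserts.
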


\begin{proof}
By the Chinese Remainder Theorem, we can write $k=12x+cy$, where $x$ runs over $\Z/c\Z$ and $y$ runs over $\Z/12\Z$.
Then
\begin{equation}
    \sum_{k(12c)} \pfrac{12}{k} e\pfrac{k\ell}{12c}
    =
    \sum_{x(c)} \sum_{y(12)} \pfrac{12}{cy} e\left(\frac{x\ell}{c}+\frac{y\ell}{12}\right)
    =
    \pfrac{12}{c}\sum_{x(c)} e\pfrac{x\ell}{c} \sum_{y(12)} \pfrac{12}{y} e\pfrac{y\ell}{12}.
\end{equation}
The $x$ sum evaluates to $c$ when $c\mid \ell$, zero otherwise, and the $y$ sum is a Gauss sum that evaluates to $\sqrt{12}\pfrac{12}{\ell}$.
Finally, if $c\mid \ell$ then $\pfrac{12}{c\ell} = \pfrac{12}{\ell/c}$.
\end{proof}

We now prove \eqref{eq:S-ident}. 
If $a\neq 0$ then 
\begin{align}
    \frac{i}{c\sqrt{12}} \sum_{h'(12c^2)} e\pfrac{hh'}{12c^2} \alpha_{h'}(a,b) &=
    \frac{i}{2c\sqrt{12}\sin(\pi/c)} e\pfrac{5b}{2c} \sum_{\ep=\pm 1} \ep \sum_{\substack{h'(12c^2) \\ h'=6\ep a+ck}} \pfrac{12}{k} e\left(\frac{hh'}{12c^2} + \frac{\ep b k}{2c}\right) \\
    &= \frac{i}{2c\sqrt{12}\sin(\pi/c)} e\pfrac{5b}{2c} \sum_{\ep=\pm 1} \ep \, e\pfrac{\ep ha}{2c^2} \sum_{k(12c)} \pfrac{12}{k} e\pfrac{k(h+6\ep b)}{12c}.
\end{align}
By Lemma~\ref{lem:12p-gauss} the latter expression equals
\begin{multline}
    \frac{i}{2\sin(\pi/c)} e\pfrac{5b}{2c} \sum_{\substack{\ep=\pm 1 \\ h= -6\ep b+cr}} \pfrac{12}{r} \ep \, e\pfrac{\ep h a}{2c^2} 
    \\=
    \begin{dcases}
    \pm \frac{i}{2\sin(\pi/c)} \pfrac{12}{r} e\left(\frac{5b\pm ra}{2c} - \frac{3 ab}{c^2} \right) & \text{ if }b\neq 0 \text{ and } h=\mp 6b+cr, \\
    \frac{i}{2\sin(\pi/c)} \pfrac{12}{r} \left(e\pfrac{ra}{2c} - e\left(-\frac{ra}{2c}\right)\right)& \text{ if }b=0 \text{ and }h=cr, \\
    0 & \text{ otherwise.}
    \end{dcases}
\end{multline}
This agrees with the definition of $\beta_h(a,b)$ because $e(\theta) - e(-\theta) = 2i\sin(2\pi\theta)$.

If $a=0$, we may assume that $b\neq 0$. Then
\begin{equation}
    \frac{i}{c\sqrt{12}} \sum_{h'(12c^2)} e\pfrac{hh'}{12c^2} \alpha_{h'}(0,b)
    =
    \frac{-1}{c\sqrt{12}\,\sin(\pi/c)} e\pfrac{5b}{2c} \sum_{k(12c)} \pfrac{12}{k}e\pfrac{hk}{12c} \sin(bk\pi/c).
\end{equation}
Writing $\sin(\pi\theta) =-\frac{i}{2}( e(\theta/2) - e(-\theta/2))$, we find that the latter expression equals
\begin{equation}
    \frac{i}{2c\sqrt{12}\,\sin(\pi/c)}e\pfrac{5b}{2c}\left(
    \sum_{k(12c)}\pfrac{12}{k} e\pfrac{k(h+6b)}{12c}
    -
    \sum_{k(12c)}\pfrac{12}{k} e\pfrac{k(h-6b)}{12c}
    \right).
\end{equation}
Each of the sums above can be evaluated by Lemma~\ref{lem:12p-gauss}.
Since $b\neq 0$, at most one of the sums is nonzero.
If either is nonzero then we have $h=\mp 6b+cr$ for some $r\in \Z$. 
In that case,
\begin{equation}
    \frac{i}{c\sqrt{12}} \sum_{h'(12c^2)} e\pfrac{hh'}{12c^2} \alpha_{h'}(0,b)
    =
    \frac{\pm i}{2\sin(\pi/c)} \pfrac{12}{r}e\pfrac{5b}{2c}.
\end{equation}
This completes the proof of Theorem~\ref{thm:main}.\qed

\section{Motivation}
\label{sec:motivation}

In this section we briefly describe a method for determining the coefficients $\alpha_h(a,b)$ and $\beta_h(a,b)$ in Theorem~\ref{thm:main}.
For simplicity we work in the special case where $c$ is a prime $p\geq 5$.

Assume that equations \eqref{eq:alpha-T}, \eqref{eq:beta-T}, and \eqref{eq:S-ident} hold.
When $a=0$, equation \eqref{eq:alpha-T} shows that $\alpha_h(0,b)$ is nonzero only if $e(\frac{h^2}{24p^2} - \frac 1{24})=1$, that is, if $h=pk$ for some integer $k$ coprime to $6$.
When $a\neq 0$, we apply \eqref{eq:alpha-T} $p$ times to conclude that
\begin{equation}
    \alpha_h(a,[pa+b]_p) = e\left(\frac{h^2}{24p} + \frac{5a}{2} - \frac{3a^2}{2p} - \frac{p}{24}\right)\alpha_h(a,b).
\end{equation}
Since $[pa+b]_p = b$, we see that $\alpha_h(a,b)$ is nonzero only if
\begin{equation}
    h^2 + 12pa-36a^2-p^2\equiv 0\pmod{24p}.
\end{equation}
Note $e(5a/2) = e(a/2)$. By considering this congruence modulo $p$, then modulo $6$, we conclude that $h=\pm 6a+pk$ for some integer $k$ with $(k,6)=1$. A similar statement holds for the $\beta_h$ coefficients. 

With these reductions it is now reasonable to use a computer to numerically solve the (still quite large) system of equations \eqref{eq:alpha-T}, \eqref{eq:beta-T}, and \eqref{eq:S-ident} for a few specific values of $p$, say $p=5$ and $p=7$.
From the numerical solution, one can guess the general form of $\alpha_h(0,b)$ and $\beta_h(a,0)$ since these are very simple.
By using \eqref{eq:S-ident} we can now determine formulas for $\alpha_h(a,0)$ and $\beta_h(0,b)$.
It remains to determine the general form of the other coefficients.

For fixed $a\neq 0$ and $h=\pm 6a+pk$ with $(k,6)=1$, repeated application of \eqref{eq:alpha-T} gives
\begin{equation}
    \alpha_h(a,[ma]_p) = e\left(\frac{ma(5\pm k)}{2p}\right) \alpha_h(a,0).
\end{equation}
If we want $[ma]_p=b$ then we need $m\equiv \bar a b\pmod{p}$, where $\bar a a\equiv 1\pmod{p}$.
Since $5\pm k$ is even, this yields
\begin{align}\label{eq:alpha(a,b)gen}
    \alpha_h(a,b) = e\left(\frac{b(5\pm k)}{2p}\right) \alpha_h(a,0).
\end{align}
Similarly, for fixed $b\neq 0$ and $h=\mp 6b+pk$ with $(k,6)=1$, repeated application of \eqref{eq:beta-T} gives
\begin{equation}
    \beta_h([-mb]_p,b) = e\left(m\left(\frac{3b^2}{p^2}\mp \frac{bk}{2p}\right) +
    \nu(m,b)\left(\frac{1}{2}-\frac{3b}{p}\right)\right)\beta_h(0,b),
\end{equation}
where $\nu(m,b) = \#\{0\leq x<m : [-xb]_p<b\}$.
One can show by induction on $m$ (or by some other means) that for $m\geq 0$ we have
\begin{equation}
    \nu(m,b) = \left\lfloor\frac{mb}{p}\right\rfloor + 
    \begin{cases}
    1 & \text{ if }p\nmid m, \\
    0 & \text{ if }p\mid m.
    \end{cases}
\end{equation}
If $[-mb]_p=a$, then $[mb]_p=p-a$, and it follows that
\begin{equation}
    \nu(m,b) = \frac{mb-[mb]_p}{p} + 1 = \frac{mb+a}{p}. 
\end{equation}
Therefore
\begin{align}\label{eq: beta(a,b) in terms of beta(0,b)}
    \beta_h(a,b) = e\left(\pm \frac{ak}{2p} - \frac{3ab}{p^2}\right) \beta_h(0,b).
\end{align}
In this way, we obtain formulas for all of the coefficients $\alpha_h(a,b)$ and $\beta_h(a,b)$.

\bibliographystyle{alpha}
\bibliography{masterref}

\end{document}